\newtheorem{theorem}{Theorem}[section]
\newtheorem{lemma}[theorem]{Lemma}
\theoremstyle{definition}
\newtheorem{prop}[theorem]{Proposition}
\newtheorem{defn}[theorem]{Definition}
\newtheorem{exa}[theorem]{Example}
\newtheorem{corollary}[theorem]{Corollary}
\theoremstyle{remark}
\newtheorem{rem}[theorem]{Remark}
\DeclareMathOperator{\Ext}{Ext} 
\DeclareMathOperator{\Tor}{Tor} \numberwithin{equation}{section}
\begin{document}

\title{Quasi-Koszulity and minimal Horseshoe Lemma}
\author{Jia-Feng L\"{U}}

\address{ Jia-Feng L\"{u}}
\address{Department of Mathematics, Zhejiang Normal University, Jinhua, Zhejiang, 321004 P.R. China}
\email{jiafenglv@gmail.com, jiafenglv@zjnu.edu.cn}
\thanks{Supported by National Natural Science Foundation of China (No. 11001245), Zhejiang
Province Department of Education Fund (No. Y201016432) and Zhejiang
Innovation Project (No. T200905).} \subjclass[2000]{Primary 18G05,
16S37; Secondary 16E30, 16W50}
\keywords{$\delta$-Koszul modules, quasi-$\delta$-Koszul modules,
minimal Horseshoe Lemma}
\begin{abstract}
In this paper, the criteria for minimal Horseshoe Lemma to be true
are given via quasi-$\delta$-Koszul modules, which are the nongraded
version of $\delta$-Koszul modules first introduced by Green and
Marcos in 2005. Moreover, some applications of minimal Horseshoe
Lemma are also given.
\end{abstract}
\maketitle

\section{Introduction}
It is well-known that Horseshoe Lemma is a basic tool in the theory
of homological algebra, which provides a method to construct a
projective resolution for the middle term via the ones of the first
and the third terms of a given short exact sequence. But what
happens if we replace the projective resolutions by the minimal
projective resolutions? See some easy examples first:

\begin{enumerate}
\item Let $A=\Bbbk[x]$, a graded polynomial algebra, $M=A/(x^2)$,
$K=A/(x)[-1]$ and $N=\Bbbk$, a fixed field. Now under a routine
computation, we can get the following corresponding minimal
projective resolutions:
$$\xymatrix{
  0 \ar[r] & A[-2]\ar[r] & A[-1]\ar[r]  & K\ar[r] & 0,}$$
$$\xymatrix{
  0 \ar[r] & A[-2]\ar[r] & A\ar[r]  & M \ar[r] & 0}$$

and
$$\xymatrix{
  0 \ar[r] & A[-1]\ar[r] & A\ar[r]  & N \ar[r] & 0.}$$
Now it is clear that we have $A[-2]\ncong A[-2] \oplus A[-1]$ and
$A\ncong A\oplus A[-1]$ as graded $A$-modules, and the exact
sequence
$$\xymatrix{
  0 \ar[r] & K\ar[r] & M\ar[r]  & N \ar[r] & 0,}$$
where $[\;]$ denotes the shift functor given by $(M[n])_t=M_{n+t}$
for any $\mathbb{Z}$-graded module $M$ and $n,\;t\in\mathbb{Z}$.

\item Let $R$ be a semiperfect Noetherian ring with identity
(over which every finitely generated left module has a finitely
generated projective cover), $M$ a finitely generated $R$-module and
$Rad(M)$ the radical of $M$. Set $K=Rad(M)$ and $N=M/Rad(M)$.
Obviously, we have the following short exact sequence
$$\xymatrix{
  0 \ar[r] & K\ar[r] & M\ar[r]  & N \ar[r] & 0.}$$
Note that $R$ is semiperfect, thus all the finitely generated
$R$-modules possess projective covers.

Let $\xymatrix{
  P_0 \ar[r] & K\ar[r] &  0,}\;\;\xymatrix{
  L_0 \ar[r] & M\ar[r] &  0} \;\;\rm{and}\;\;\xymatrix{
  Q_0 \ar[r] & N\ar[r] &  0}$ be the corresponding projective
covers. Then $L_0\cong Q_0$ as $R$-modules since $N=M/Rad(M)$.
Therefore, we have $L_0\ncong P_0\oplus Q_0$ as $R$-modules since
$P_0\neq0$.

\item Let $A$ be a $\delta$-Koszul algebra (Green and Marcos, 2005) and
$\mathcal{K}^{\delta}(A)$ be the category of $\delta$-Koszul
modules. Let
$$\xymatrix{
  0 \ar[r] & K\ar[r] & M\ar[r]  & N \ar[r] & 0}$$ be an exact
sequence in $\mathcal{K}^{\delta}(A)$, and $\xymatrix{
  \mathcal{P}_* \ar[r] & K\ar[r] &  0,}\;\;\xymatrix{
  \mathcal{L}_* \ar[r] & M\ar[r] &  0}$ and $\xymatrix{
  \mathcal{Q}_* \ar[r] & N\ar[r] &  0}$ be the corresponding minimal graded projective resolutions. Then by Theorem 2.6 of
\cite{L} (also see the below), we have the following commutative
diagram with exact rows and columns
$$\xymatrix{
0 \ar[r] &\mathcal{P}_* \ar[d] \ar[r] & \mathcal{L}_* \ar[d] \ar[r] & \mathcal{Q}_* \ar[d] \ar[r] & 0 \\
  0 \ar[r] &K \ar[d] \ar[r] & M \ar[d] \ar[r] & N \ar[d] \ar[r] & 0 \\
 & 0  & 0 & 0 }$$$$\rm{(Fig. \;1.1)}$$ and $L_n\cong P_n\oplus Q_n$ as graded $A$-modules for all $n\geq0$.
\end{enumerate}

From the above examples, we can see clearly that if we replace
projective resolutions by minimal projective resolutions in the
Horseshoe Lemma, the conclusion is inconclusive. For the convenience
of narrating, we state the so-called ``minimal Horseshoe Lemma''
now. Roughly speaking, minimal Horseshoe Lemma is the ``minimal''
version and a special case of the classic Horseshoe Lemma, which can
be stated as follows:

\begin{itemize}
\item Let $R$ be any ring with identity and $\xymatrix{0 \ar[r] & K
\ar[r] & M\ar[r]  & N \ar[r] & 0}$ be an exact sequence of
$R$-modules. Then for any given diagram
$$\xymatrix{
& \mathcal{P}_* \ar[d]  &  & \mathcal{Q}_* \ar[d] \\
  0 \ar[r] &K \ar[d] \ar[r] & M  \ar[r] & N \ar[d] \ar[r] & 0 \\
 & 0  &  & 0 }$$$$\rm{(Fig. \;1.2)}$$
with $\mathcal{P}_*$ and $\mathcal{Q}_*$ being minimal projective
resolutions of $K$ and $N$, respectively. Then we can complete Fig.
1.2 into Fig. 1.1 such that the rows and columns in Fig. 1.1 are all
exact and $\xymatrix{ \mathcal{L}_* \ar[r] & M \ar[r] & 0
  }$ is also a minimal projective resolution.
\end{itemize}

Therefore, it is interesting and meaningful to find conditions for
the minimal Horseshoe Lemma to be true. In 2008, Wang and Li studied
the conditions for the minimal Horseshoe Lemma to be true in the
graded case and gave some sufficient conditions. Moreover, they said
`Though we have found some sufficient conditions for the minimal
Horseshoe Lemma to be held, an interesting but difficult question is
how to find some necessary conditions''. In fact, Theorem 2.6 of
\cite{L} has provided a necessary and sufficient condition for the
minimal Horseshoe Lemma to be true via $\delta$-Koszul modules in
the graded case:

\begin{itemize}
\item  ({\bf Theorem 2.6, \cite{L}}) Let $A$ be a
standard graded algebra and $$\xymatrix{
  0 \ar[r] & K\ar[r] & M\ar[r]  & N \ar[r] & 0}$$ be an exact sequence with $M,\;N$ being $\delta$-Koszul modules.
Then $K$ is a $\delta$-Koszul module if and only if the minimal
Horseshoe Lemma holds, here we refer to Section 2 (or \cite{L} and
\cite{GM}) for the notions of standard graded algebra and
$\delta$-Koszul module.
\end{itemize}

As direct corollaries, we can obtain necessary and sufficient
conditions for the minimal Horseshoe Lemma to be true via Koszul
(see \cite{P}), $d$-Koszul (see \cite{B}, \cite{GMMZ} and \cite{YZ})
and piecewise-Koszul (see \cite{LHL}) objects and so on since all of
them are special $\delta$-Koszul objects. Recently, Green and
Mart\'{\i}nez-Villa generalized Koszul objects to the nongraded case
and introduced quasi-Koszul objects (see \cite{GM1}); He, Ye and Si
generalized $d$-Koszul objects to the nongraded case and introduced
quasi-$d$-Koszul objects (see \cite{HY} and \cite{S}) and the author
of the present paper generalized piecewise-Koszul objects to the
nongraded case and introduced quasi-piecewise-Koszul objects (see
\cite{L1} and \cite{LPW}). Motivated by the above, now one can ask a
natural question: Can we give some conditions for the minimal
Horseshoe Lemma to be true via these ``quasi-Koszul-type'' objects?

The main purpose of this paper is to give an answer to the above
question and we prove the following result:

\medskip
\noindent{{\bf{Theorem A}}} {\it Let $R$ be an augmented Noetherian
semiperfect algebra with Jacobson radical $J$ and
$$\xymatrix{\xi:
  0 \ar[r] & K \ar[r] & M\ar[r]  & N \ar[r] & 0}$$ be a short exact sequence
in the category of quasi-$\delta$-Koszul modules. Then $JK=K\cap JM$
if and only if the minimal Horseshoe Lemma holds with respect to
$\xi$.}

As an immediate corollary of Theorem A, we obtain the following
results:

\medskip
\noindent{{\bf{Corollary B}}} {\it Let $R$ be an augmented
Noetherian semiperfect algebra with Jacobson radical $J$ and
$$\xymatrix{\xi:
  0 \ar[r] & K \ar[r] & M\ar[r]  & N \ar[r] & 0}$$ be a short exact sequence
in the category $\mathcal{C}$. Then the following statements are
true:
\begin{enumerate}
\item If $\mathcal{C}$ denotes the category of quasi-Koszul modules,
then $JK=K\cap JM$ if and only if the minimal Horseshoe Lemma holds
with respect to $\xi$.
\item If $\mathcal{C}$ denotes the category of quasi-$d$-Koszul modules,
then $JK=K\cap JM$ if and only if the minimal Horseshoe Lemma holds
with respect to $\xi$.
\item If $\mathcal{C}$ denotes the category of quasi-piecewise-Koszul modules,
then $JK=K\cap JM$ if and only if the minimal Horseshoe Lemma holds
with respect to $\xi$.
\end{enumerate}}
\begin{rem}
In Corollary B, (1) and (2) show that Theorem 2.8 of \cite{WL} and
Theorem 3.1 of \cite{LZ} are in fact necessary and sufficient
conditions; and (3) has been appeared and proved directly in
\cite{LPW}.
\end{rem}

With the help of minimal Horseshoe Lemma, one can obtain some
surprising results which may be wrong in general:

\medskip
\noindent{{\bf{Theorem C}}} {\it Let $R$ be an augmented Noetherian
semiperfect algebra with Jacobson radical $J$ and
$$\xymatrix{\xi:
  0 \ar[r] & K \ar[r] & M\ar[r]  & N \ar[r] & 0}$$ be a short exact sequence in
the category of finitely generated $R$-modules. If the minimal
Horseshoe Lemma holds for $\xi$, then we have the following
statements:
\begin{enumerate}
\item $M$ is
projective if and only if $K$ and $N$ are both projective;

\item $pd(M)=\max\{pd(K),
pd(N)\}$.\end{enumerate}}

As mentioned above, the notion of {\it quasi-Koszul module} was
introduced by Green and Mart\'{\i}nez-Villa in 1996 (see
\cite{GM1}). Moreover, they studied the extension closure of the
category of quasi-Koszul modules and got the following result:

\begin{itemize}
\item Let $R$ be a Noetherian semiperfect algebra with Jacobson radical $J$ and $$\xymatrix{
  0 \ar[r] & K\ar[r] & M\ar[r]  & N \ar[r] & 0}$$ be an exact sequence of finitely generated
$R$-modules with $JK=K\cap JM$. If $K$ and $N$ are quasi-Koszul
modules, then so is $M$.
\end{itemize}

Motivated by the above, a naive but interesting question is: If $M$
and $N$ are quasi-Koszul, then is $K$ quasi-Koszul or if $K$ and $M$
are quasi-Koszul, then is $N$ quasi-Koszul? Green and
Mart\'{\i}nez-Villa did not discuss these in \cite{GM1}. With the
help of minimal Horseshoe Lemma, we get the following assertions:

\medskip
\noindent{{\bf{Theorem D}}} {\it Let $R$ be an augmented Noetherian
semiperfect algebra with Jacobson radical $J$ and
$$\xymatrix{\xi:
  0 \ar[r] & K \ar[r] & M\ar[r]  & N \ar[r] & 0}$$ be a short exact sequence in
the category of finitely generated $R$-modules with the minimal
Horseshoe Lemma holding for $\xi$. Then we have the following
statements:
\begin{enumerate}
\item If $M$ is a quasi-Koszul module, then so is $K$;

\item If we have $J^2\Omega ^i(K) = \Omega^i(K)
\cap J^2\Omega^i(M)$ for all $i\geq0$, then $N$ is a quasi-Koszul
module provided that $K$ and $M$ are quasi-Koszul
modules.\end{enumerate}}

In a word, the main purposes of this paper are to find some
equivalent conditions and applications for minimal Horseshoe Lemma.
More precisely, in Section 2, as preknowledge, we will give the
definition of quasi-$\delta$-Koszul modules. In Section 3, we will
prove Theorem A. Section 4 mainly focus on the applications of
minimal Horseshoe Lemma and we will prove Theorems C and D.

\medskip
\section{Quasi-$\delta$-Koszul modules}
In this section, $A=\bigoplus_{i\geq 0}A_i$ denotes a standard
graded algebra, i.e., $A$ satisfies (a)
$A_0=\Bbbk\times\cdots\times\Bbbk$, a finite product of the ground
field $\Bbbk$; (b) $A_i \cdot A_j = A_{i+j}$ for all $0\leq i,
j<\infty$; and (c) dim$_{\Bbbk}A_i<\infty$ for all $i\geq 0$.
Clearly, the graded Jacobson radical of a standard graded algebra
$A$ is obvious $\bigoplus_{i\geq 1}A_i$, which is usually denoted by
$J$.

From \cite{GM1}, we know that standard graded algebras can be
realized by finite quivers:
\begin{prop}
Let $A$ be a standard graded algebra. Then there exists a finite
quiver $\Gamma$ and a graded ideal $I$ in $\Bbbk\Gamma$ with
$I\subset \sum_{n\geq 2}(\Bbbk\Gamma)_n$ such that $A\cong
\Bbbk\Gamma/I$ as graded algebras.
\end{prop}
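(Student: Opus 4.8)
The plan is to realize $A$ through its degree-zero idempotents as vertices and a basis of $A_1$ as arrows, then build the natural surjection from the path algebra and identify its kernel. First I would invoke condition (a), $A_0=\Bbbk\times\cdots\times\Bbbk$, to fix a complete set $\{e_1,\ldots,e_s\}$ of orthogonal idempotents with $\sum_i e_i=1$, one for each factor; these will be the vertices of $\Gamma$. Since $A$ is standard, the graded Jacobson radical is $J=\bigoplus_{i\geq 1}A_i$ and condition (b) gives $J^2=\bigoplus_{n\geq 2}A_n$, so that $J/J^2\cong A_1$ as an $A_0$-bimodule. Decomposing along the idempotents, $A_1=\bigoplus_{i,j}e_jA_1e_i$, and I would declare the arrows from $i$ to $j$ to be the elements of a fixed $\Bbbk$-basis of $e_jA_1e_i$; condition (c) ($\dim_{\Bbbk}A_1<\infty$) guarantees that $\Gamma$ is a finite quiver.

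Next I would define a graded algebra homomorphism $\phi\colon \Bbbk\Gamma\to A$ by sending each vertex $i$ to $e_i$, each arrow to the corresponding basis element of $A_1$, and extending multiplicatively along paths. Because the path-length grading on $\Bbbk\Gamma$ is carried to the internal grading of $A$ (arrows land in degree $1$), the map $\phi$ is degree-preserving. Surjectivity follows from standardness: condition (b) yields $A_n=A_1^{\,n}$ for all $n$, and since the arrows span $A_1$ over $A_0$, the images of length-$n$ paths span $A_n$. Setting $I:=\ker\phi$, which is automatically a graded two-sided ideal, produces the graded isomorphism $A\cong \Bbbk\Gamma/I$.

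It then remains to verify $I\subset\sum_{n\geq 2}(\Bbbk\Gamma)_n$, that is, that $\phi$ is injective in degrees $0$ and $1$. In degree $0$, $\phi$ sends the basis $\{e_1,\ldots,e_s\}$ of $(\Bbbk\Gamma)_0$ to the orthogonal idempotents spanning $A_0$, hence is an isomorphism onto $A_0$; in degree $1$, $\phi$ sends the arrow basis of $(\Bbbk\Gamma)_1$ bijectively onto the chosen basis of $A_1$ by construction. Thus $I\cap(\Bbbk\Gamma)_0=I\cap(\Bbbk\Gamma)_1=0$, which gives the claimed containment and completes the argument.

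I expect the only point requiring real care is the bookkeeping of the $A_0$-bimodule structure on $A_1$: the arrows must be chosen compatibly with the decomposition $A_1=\bigoplus_{i,j}e_jA_1e_i$ so that $\phi$ is simultaneously a well-defined algebra map and bijective in degree $1$. Once this is set up correctly, the remaining verifications—gradedness of $\phi$, surjectivity from standardness, and injectivity in degrees $0$ and $1$—are routine. This is essentially the graded counterpart of Gabriel's realization theorem, made cleaner here by the fact that $A_0$ is already basic.
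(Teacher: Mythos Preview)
Your argument is correct and is the standard Gabriel-type realization in the graded setting. Note, however, that the paper does not supply its own proof of this proposition: it simply quotes the result from \cite{GM1} (Green and Mart\'{\i}nez-Villa), so there is no in-paper argument to compare against. What you have written is essentially the proof one would find in that reference, and all the steps---choice of primitive orthogonal idempotents in $A_0$, arrows from an $A_0$-bimodule basis of $A_1$, surjectivity of $\phi$ from $A_n=A_1^{\,n}$, and injectivity in degrees $0$ and $1$ giving $I\subset\sum_{n\geq 2}(\Bbbk\Gamma)_n$---are sound.
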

\begin{defn}\label{d1}
Let $A$ be a standard graded $\Bbbk$-algebra and $M=\bigoplus_{i\geq
0}M_i$ a finitely generated graded $A$-module. We call $M$ a
{\it{$\delta$-Koszul module\/}} provided that $M$ admits a minimal
graded projective resolution
$$\xymatrix{
  \cdots \ar[r] & P_n\ar[r] & P_{n-1} \ar[r]& \cdots \ar[r] & P_1 \ar[r] & P_0 \ar[r] & M \ar[r] &
  0,}$$ such that each $P_n$
is generated in degree $\delta(n)$ for all $n\geq0$, where $\delta:
\mathbb{N}\rightarrow\mathbb{N}$ is a set function and $\mathbb{N}$
denotes the set of natural numbers.

In particular, the standard graded algebra $A$ will be called a {\it
$\delta$-Koszul algebra} if the trivial $A$-module $A_0$ is a
$\delta$-Koszul module.
\end{defn}
\begin{rem}
(1) The set function $\delta$ is in fact strictly increasing.

(2) The notion of $\delta$-Koszul algebra in this paper is different
from its original definition (\cite{GM}) and we don't request its
Yoneda algebra to be finitely generated.

\end{rem}
\begin{exa}
(1) Koszul algebras/modules (see \cite{P}) are $\delta$-Koszul
algebras/modules, where the set function $\delta(i)=i$ for all
$i\geq 0$;

(2) $d$-Koszul algebras/modules (see \cite{B} and \cite{GMMZ}) are
$\delta$-Koszul algebras/modules, where the set function
$$\delta(n)=\left\{\begin{array}{ll}
\frac{nd}{2},  &  \mbox{if $n$ is even,}\\
\frac{(n-1)d}{2}+1,  &  \mbox{if $n$ is odd.}
\end{array}
\right.
$$

(3) Piecewise-Koszul algebras/modules (see \cite{LHL}) are
$\delta$-Koszul algebras/modules, where the set function
$$\delta(n)=\left\{\begin{array}{llll}
\frac{nd}{p},  &  \mbox{if $n \equiv 0 (\textrm{mod} p)$,}\\
\frac{(n-1)d}{p}+1,  &  \mbox{if $n \equiv 1 (\textrm{mod} p)$,}\\
\cdots& \cdots, \\
\frac{(n\!-\!p+1)d}{p}+p\!-\!1, \quad & \mbox{if $n\equiv p\!-\!\!1
(\textrm{mod} p)$.}
\end{array}
\right.
$$
and $d\geq p\ge 2$ are given integers.
\end{exa}

The following theorem generalizes (Proposition 3.1, \cite{GM1}).
\begin{theorem}\label{th1}
Let $A=\Bbbk\Gamma/I$ be a standard graded algebra and
$$\xymatrix{
  \cdots \ar[r] & P_n \ar[r]^{d_n} & \cdots \ar[r] & P_1 \ar[r]^{d_1} & P_0 \ar[r]^{d_0} & A_0 \ar[r] &
  0}$$ a minimal graded projective resolution of the trivial
$A$-module $A_0$. Then the following statements are equivalent:
\begin{enumerate}
\item $A$ is a $\delta$-Koszul algebra;

\item for all $n\geq 0$, $\ker d_n\subseteq
J^{\delta(n+1)-\delta(n)}P_n$ and $J\ker d_n=\ker d_n\cap
J^{\delta(n+1)-\delta(n)+1}P_n$;

\item for any fixed $n\geq 1$ and $1\leq i\leq n$,
$P_i=\bigoplus_{l\geq 1}Ae_{i_l}[-\delta(i)]$, the component of
$d_i(e_{i_l})$ in some $Ae_{i-1_{m}}$ is in
$A_{\delta(i)-\delta(i-1)}$, $\ker d_n\subseteq
J^{\delta(n+1)-\delta(n)}P_n$ and $J\ker d_n=\ker d_n\cap
J^{\delta(n+1)-\delta(n)+1}P_n$. \end{enumerate}
\end{theorem}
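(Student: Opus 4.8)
The plan is to prove the cycle $(1)\Rightarrow(2)\Rightarrow(1)$ and then settle $(1)\Leftrightarrow(3)$, with the entire argument resting on one elementary consequence of standard gradedness. Since $A_iA_j=A_{i+j}$, one has $J^k=\bigoplus_{i\ge k}A_i$, and hence for any graded module $M$ that is generated in a single degree $d$ the identity
\[
J^kM=\bigoplus_{i\ge d+k}M_i
\]
holds for all $k\ge 0$. I would record this first, because it is precisely the dictionary translating the ``generated in degree $\delta(n)$'' language of $(1)$ and $(3)$ into the ``$J$-power'' language of $(2)$.

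For $(1)\Rightarrow(2)$, assume each $P_n$ is generated in degree $\delta(n)$. Exactness gives $\ker d_n=\operatorname{im}d_{n+1}$, and since $d_{n+1}$ is a degree-$0$ map out of $P_{n+1}$, the submodule $\ker d_n$ is generated in degree $\delta(n+1)$. Applying the displayed identity to $P_n$ gives $J^{\delta(n+1)-\delta(n)}P_n=\bigoplus_{i\ge\delta(n+1)}(P_n)_i\supseteq\ker d_n$, which is the first condition; applying it to $\ker d_n$ and to $P_n$ yields $J\ker d_n=\bigoplus_{i\ge\delta(n+1)+1}(\ker d_n)_i=\ker d_n\cap J^{\delta(n+1)-\delta(n)+1}P_n$, the second. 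This direction is purely formal once the identity is in hand.

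The heart of the theorem is $(2)\Rightarrow(1)$, which I would prove by induction on $n$, showing $P_n$ is generated in degree $\delta(n)$. The base case is clear: $P_0$ is the graded projective cover of the degree-$0$ module $A_0$, so it is generated in degree $0=\delta(0)$. For the step, assume $P_n$ is generated in degree $\delta(n)$; by minimality $d_{n+1}\colon P_{n+1}\to\ker d_n$ is a projective cover, so the degrees in which $P_{n+1}$ is generated match those of the top $\ker d_n/J\ker d_n$. The first hypothesis, read through the inductive identity, gives $\ker d_n\subseteq\bigoplus_{i\ge\delta(n+1)}(P_n)_i$, bounding the degrees of $\ker d_n$ below by $\delta(n+1)$; the second hypothesis rewrites as $J\ker d_n=\bigoplus_{i\ge\delta(n+1)+1}(\ker d_n)_i$. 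Combining the two, $\ker d_n/J\ker d_n$ is concentrated in the single degree $\delta(n+1)$, so $P_{n+1}$ is generated there, closing the induction. I expect this translation to be the main obstacle and the step needing the most care: the first condition alone only bounds degrees from below, and one must see that it is exactly the second condition that annihilates every would-be generator of $\ker d_n$ in degrees above $\delta(n+1)$, so that the top is \emph{pure} rather than merely bounded.

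Finally, $(1)\Leftrightarrow(3)$ is bookkeeping. From $(1)$, choosing homogeneous generators of degree $\delta(i)$ gives $P_i=\bigoplus_{l}Ae_{i_l}[-\delta(i)]$; as $d_i$ has degree $0$, the image $d_i(e_{i_l})$ is homogeneous of degree $\delta(i)$, so its component along a generator $e_{(i-1)_m}$ of degree $\delta(i-1)$ has coefficient in $A_{\delta(i)-\delta(i-1)}$, which is the stated map condition, while the two kernel conditions in $(3)$ are those of $(2)$ already supplied by $(1)\Rightarrow(2)$. Conversely $(3)$ literally contains the hypotheses of $(2)$, so $(3)\Rightarrow(2)\Rightarrow(1)$, and the three statements are equivalent.
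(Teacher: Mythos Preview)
Your proof is correct and follows the same inductive strategy as the paper's, but your packaging is cleaner. You isolate the identity $J^kM=\bigoplus_{i\ge d+k}M_i$ (for $M$ generated in a single degree $d$) at the outset and use it as a uniform dictionary, which reduces both $(1)\Rightarrow(2)$ and the inductive step of $(2)\Rightarrow(1)$ to one-line translations. The paper argues more ad hoc: for $(1)\Rightarrow(2)$ it handles the equality $J\ker d_n=\ker d_n\cap J^{\delta(n+1)-\delta(n)+1}P_n$ by a contradiction argument on generators, and for $(2)\Rightarrow(1)$ it splits the work into two separate inductive claims, first that $(P_n)_j=0$ for $j<\delta(n)$ and then that every homogeneous element of $(P_n)_i$ with $i>\delta(n)$ lies in $J^sP_n$ for some $s>0$. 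Your route is shorter and makes the role of each hypothesis more transparent; the paper's version has the minor advantage of exhibiting explicitly which half of condition $(2)$ drives each bound. One small quibble on your last line: condition $(3)$ as written only records the kernel conditions for $n\ge1$, so it does not \emph{literally} contain the $n=0$ instance of $(2)$; but this is harmless, since the structural clause $P_i=\bigoplus_lAe_{i_l}[-\delta(i)]$ for all $i\ge1$ already yields $(1)$ directly.
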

\begin{proof}
(1)$\Rightarrow$(2) Suppose that $A$ is a $\delta$-Koszul algebra.
Then for all $n\geq 0$, $P_n$ is generated in degree $\delta(n)$.
Note that $d_{n+1}(P_{n+1})=\ker d_{n}$, which implies that $\ker
d_n$ is generated in degree $\delta(n+1)$. But recall that $P_n$ is
generated in degree $\delta(n)$, hence the elements of degree
$\delta(n+1)$ of $P_n$ are in $J^{\delta(n+1)-\delta(n)}P_n$. Thus
for all $n\geq 0$, $\ker d_n\subseteq J^{\delta(n+1)-\delta(n)}P_n$.
Now it is clear that $J\ker d_n\subseteq\ker d_n\cap
J^{\delta(n+1)-\delta(n)+1}P_n$. Now let $x\in\ker d_n\cap
J^{\delta(n+1)-\delta(n)+1}P_n$ be a homogeneous element of degree
$i$. It is easy to see that $i\geq\delta(n+1)+1$. If $x$ is not in
$J\ker d_n$, then $x$ is a generator of $\ker d_n$, which implies
that $\ker d_n$ is generated in degree larger than $\delta(n+1)+1$
since the degree of $x$ is larger than $\delta(n+1)+1$, which
contradicts to that $\ker d_n$ is generated in degree $\delta(n+1)$.
Therefore, $x\in J\ker d_n$ and $J\ker d_n\supseteq\ker d_n\cap
J^{\delta(n+1)-\delta(n)+1}P_n$. Thus we are done.

(2)$\Rightarrow$(1) First we claim that for all $n\geq 0$,
$(P_n)_j=0$ for all $j<\delta(n)$. Do it by induction on $n$. First
we prove that $(P_0)_j=0$ for $j< \delta(0)=0$. If not, since $P_0$
is a finitely generated graded module, there exists a smallest
$j_0<\delta(0)$ such that $(P_0)_{j_0}\neq 0$. Let $x\neq 0$ be a
homogeneous element of $P_0$ of degree $j_0$. Then $d_0(x)=0$ since
$d_0(x)\in (A_0)_{j_0}$ and $A_0=(A_0)_{0}$, which implies that
$x\in \ker d_0\subset JP_0$, which contradicts the choice of $j_0$.
Now suppose that $(P_{n-1})_j=0$ for all $j<\delta(n-1)$. Similarly,
assume that there exists a smallest $j_0'<\delta(n)$ such that
$(P_n)_{j_0'}\neq 0$. Let $x\neq 0$ be a homogeneous element of
$P_n$ of degree $j'_0$. Note that $d_n(x)\in Im d_n=\ker
d_{n-1}\subseteq J^{\delta(n)-\delta(n-1)}P_{n-1}$, we have
$d_n(x)=0$ since $J^{\delta(n)-\delta(n-1)}P_{n-1}$ is supported in
$\{i|i\geq \delta(n)\}$. Therefore, $x\in \ker d_n\subseteq
J^{\delta(n+1)-\delta(n)}P_n$, which contradicts the choice of
$j'_0$.

Now we claim that for any $x\in (P_n)_i$ with $i>\delta(n)$, then
$x\in J^sP_n$ for some $s>0$. If we prove this claim, then it is
clear that for all $n\geq 0$, $P_n$ is generated in degree
$\delta(n)$. In fact, we also prove this by induction on $n$. Note
that $A_0$ is generated in degree 0, thus $d_0(x)\in JA_0=J$, which
implies that $x\in d_0^{-1}(J)=JP_0+\ker d_0\subseteq JP_0$.
Therefore, $P_0$ is generated in degree 0. Suppose that for any
$x\in (P_{n-1})_i$ with $i>\delta(n-1)$, then we have $x\in
J^sP_{n-1}$ for some $s>0$ and $P_{n-1}$ is generated in degree
$\delta(n-1)$. By the condition $J\ker d_{n-1}=\ker d_{n-1}\cap
J^{\delta(n)-\delta(n-1)+1}P_{n-1}$, we have $\ker d_{n-1}$ is
generated in degree $\delta(n)$, which implies that $P_n$ is
generated in degree $\delta(n)$ for all $n\geq 0$. Of course, for
any $x\in (P_n)_i$ with $i>\delta(n)$, we have $x\in J^sP_n$ for
some $s>0$.

(1), (2)$\Rightarrow$(3) Suppose that $A$ is a $\delta$-Koszul
algebra. Then for all $i\geq 0$, $P_i$ is generated in degree
$\delta(i)$. Thus all $e_{i_l}$ are of degree $\delta(i)$, which
implies that $d_i(e_{i_l})\in (P_{i-1})_{\delta(i)}$. But $P_{i-1}$
is generated in degree $\delta(i-1)$, hence
$(P_{i-1})_{\delta(i)}\subseteq
A_{\delta(i)-\delta(i-1)}(P_{i-1})_{\delta(i-1)}$. Now (3) is clear
by (2).

(3)$\Rightarrow$(1) By an induction on $n$, it suffices to prove
that $P_0$ is generated in degree $\delta(0)$ and $\ker d_0$ is
generated in degree $\delta(1)$, which is similar to the proof of
$(2)\Rightarrow (1)$ and we omit the details.
\end{proof}

\begin{corollary}\label{cor1}
Let $A$ be a standard graded algebra, $M$ a finitely 0-generated
graded $A$-module and
$$\xymatrix{
  \cdots \ar[r] & P_n \ar[r]^{d_n} & \cdots \ar[r] & P_1 \ar[r]^{d_1} & P_0 \ar[r]^{d_0} & M \ar[r] &
  0}$$ a minimal graded projective resolution of $M$. Then $M$ is a
  $\delta$-Koszul module if and only if for all $n\geq 0$, $\ker d_n\subseteq
J^{\delta(n+1)-\delta(n)}P_n$ and $J\ker d_n=\ker d_n\cap
J^{\delta(n+1)-\delta(n)+1}P_n$.
\end{corollary}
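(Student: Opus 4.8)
The plan is to observe that, by Definition~\ref{d1}, the statement ``$M$ is a $\delta$-Koszul module'' means precisely that every term $P_n$ of the given minimal graded projective resolution is generated in degree $\delta(n)$. Thus Corollary~\ref{cor1} is exactly the module-level analogue of the equivalence $(1)\Leftrightarrow(2)$ in Theorem~\ref{th1}, and I would prove it along the same two directions, with the trivial module $A_0$ replaced throughout by $M$.

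For necessity, suppose each $P_n$ is generated in degree $\delta(n)$. Since $\ker d_n = \operatorname{Im}d_{n+1} = d_{n+1}(P_{n+1})$ and $P_{n+1}$ is generated in degree $\delta(n+1)$, the submodule $\ker d_n$ is generated in degree $\delta(n+1)$; as $P_n$ is generated in degree $\delta(n)$, its elements of degree $\delta(n+1)$ lie in $J^{\delta(n+1)-\delta(n)}P_n$, whence $\ker d_n \subseteq J^{\delta(n+1)-\delta(n)}P_n$. The inclusion $J\ker d_n \subseteq \ker d_n \cap J^{\delta(n+1)-\delta(n)+1}P_n$ is immediate, and for the reverse inclusion I would take a homogeneous $x$ in the intersection and argue, exactly as in Theorem~\ref{th1}, that if $x\notin J\ker d_n$ then $x$ would be a generator of $\ker d_n$ of degree strictly larger than $\delta(n+1)$, contradicting that $\ker d_n$ is generated in degree $\delta(n+1)$.

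For sufficiency, assume the two conditions on $\ker d_n$. I would run the same two inductions on $n$ used in the proof of $(2)\Rightarrow(1)$: first that $(P_n)_j = 0$ for all $j<\delta(n)$, and then that every $P_n$ is generated in degree $\delta(n)$ (so that $M$ is $\delta$-Koszul). The inductive steps transfer verbatim, since they use only that $\operatorname{Im}d_n = \ker d_{n-1} \subseteq J^{\delta(n)-\delta(n-1)}P_{n-1}$ is supported in degrees $\geq\delta(n)$, together with the equality in condition $(2)$, which forces $\ker d_{n-1}$ to be generated in degree $\delta(n)$.

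The only genuine difference from Theorem~\ref{th1} lies in the base cases $n=0$, where $A_0$ (concentrated in degree $0$) is replaced by the arbitrary module $M$; this is also the step I expect to require the most care. Here the hypothesis that $M$ is finitely $0$-generated, i.e. $M = AM_0$ with $\delta(0)=0$, is exactly what is needed: it yields $M_j = 0$ for $j<0$ and $M_i = A_iM_0 \subseteq JM$ for $i>0$. These are precisely the two facts that powered the base cases for $A_0$: the former gives $d_0(x)=0$ for a putative nonzero $x\in(P_0)_{j_0}$ with $j_0<0$, forcing $x\in\ker d_0\subseteq JP_0$ and the desired contradiction; the latter gives $d_0(x)\in JM = d_0(JP_0)$ for $x\in(P_0)_i$ with $i>0$, so that $x\in JP_0 + \ker d_0 = JP_0$. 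With the base cases settled in this way, both inductions proceed exactly as in Theorem~\ref{th1}, and the routine details of those steps I would omit.
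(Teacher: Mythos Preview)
Your proposal is correct and follows exactly the intended route: the paper states Corollary~\ref{cor1} without a separate proof, leaving it as the immediate module-level analogue of the equivalence $(1)\Leftrightarrow(2)$ in Theorem~\ref{th1}. Your careful handling of the base case $n=0$, using that $M$ is finitely $0$-generated to obtain $M_j=0$ for $j<0$ and $M_i\subseteq JM$ for $i>0$, is precisely the adaptation required, and the inductive steps indeed transfer verbatim.
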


Motivated by Corollary \ref{cor1}, we get the following definition:
\begin{defn}
Let $R$ be a Noetherian semiperfect algebra with Jacobson radical
$J$ and $M$ a finitely generated $R$-module. Let
$$\xymatrix{
  \cdots \ar[r] & P_n \ar[r]^{d_n} & \cdots \ar[r] & P_1 \ar[r]^{d_1} & P_0 \ar[r]^{d_0} & M \ar[r] &
  0}$$ be a minimal projective resolution of $M$. Then we call $M$ a
  {\it quasi-$\delta$-Koszul module} if for all $n\geq 0$, we have $\ker d_n\subseteq
J^{\delta(n+1)-\delta(n)}P_n$ and $J\ker d_n=\ker d_n\cap
J^{\delta(n+1)-\delta(n)+1}P_n$, where $\delta:
\mathbb{N}\rightarrow\mathbb{N}$ is a strictly increasing set
function.

In particular, $R$ is called a {\it quasi-$\delta$-Koszul algebra}
if $R/J$ is a quasi-$\delta$-Koszul module.

Let $\mathcal{Q}^{\delta}(R)$ denote the category of
quasi-$\delta$-Koszul modules.
\end{defn}

\begin{exa}
Quasi-Koszul algebras/modules (see \cite{GM1}), quasi-$d$-Koszul
algebras/modules (see \cite{HY}) and quasi-piecewise-Koszul
algebras/modules (see \cite{L}) are all special
quasi-$\delta$-Koszul modules.
\end{exa}

\medskip
\section{Criteria for minimal Horseshoe Lemma}
Throughout this section, $R$ denotes an augmented Noetherian
semiperfect algebra with Jacobson radical $J$ and we will mainly
concentrate on the proof of Theorem A.

\begin{lemma}\label{lem1}
Let $\xymatrix{
  0 \ar[r] & K \ar[r] & M\ar[r]  & N \ar[r] & 0  }$ be an exact
sequence of finitely generated $R$-modules. Then $JK=K\cap JM$ if
and only if we have the following commutative diagram with exact
rows and columns
$$\xymatrix{
           & 0 \ar[d]        & 0 \ar[d]        & 0 \ar[d] \\
  0 \ar[r] & \Omega^1(K) \ar[d] \ar[r] & \Omega^1(M) \ar[d]
  \ar[r] & \Omega^1(N) \ar[d] \ar[r] & 0 \\
  0 \ar[r] & P_0 \ar[d] \ar[r] & L_0 \ar[d] \ar[r] & Q_0 \ar[d] \ar[r] & 0 \\
  0 \ar[r] & K \ar[d] \ar[r] & M \ar[d] \ar[r] & N \ar[d] \ar[r] & 0, \\
           & 0               & 0               & 0 }$$$$\rm{(Fig. \;3.1)}$$
such that $P_0\rightarrow K\rightarrow 0$, $L_0\rightarrow
M\rightarrow 0$ and $Q_0\rightarrow N\rightarrow 0$ are projective
covers.
\end{lemma}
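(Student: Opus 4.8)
The plan is to translate the radical condition $JK=K\cap JM$ into a statement about the functor $-/J(-)\cong (R/J)\otimes_R-$ (``passing to tops'') and then to build Fig.~3.1 by lifting projective covers along that functor. I would begin with the elementary remark that, since $-/J(-)$ is right exact, applying it to $0\to K\to M\to N\to 0$ always produces an exact sequence
$$\xymatrix{K/JK \ar[r] & M/JM \ar[r] & N/JN \ar[r] & 0,}$$
and that the kernel of $K/JK\to M/JM$ is exactly $(K\cap JM)/JK$. Hence the hypothesis $JK=K\cap JM$ is \emph{equivalent} to the assertion that this sequence is short exact (that is, left exact at $K/JK$). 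This reformulation drives both implications.

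For the forward direction I would fix projective covers $q\colon Q_0\to N$ and $p\colon L_0\to M$ (these exist since $R$ is semiperfect). Composing $p$ with $M\to N$ gives a surjection $L_0\to N$, and projectivity of $L_0$ produces $\phi\colon L_0\to Q_0$ with $q\phi=(M\to N)p$. Because $p$ and $q$ are isomorphisms on tops and $M/JM\to N/JN$ is onto, $\phi$ is onto on tops, hence onto by Nakayama; as $Q_0$ is projective the row $0\to P_0\to L_0\to Q_0\to 0$ splits, where $P_0:=\ker\phi$ is projective, and a one-line chase shows $p$ maps $P_0$ into $K$, giving $\psi\colon P_0\to K$ making the left square commute. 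The crux is then to check that $\psi$ is a \emph{projective cover}: applying $-/J(-)$ to the two short exact rows at once, the middle row stays short exact because it splits, the bottom row stays short exact because of the hypothesis, and the induced vertical maps at $M$ and $N$ are isomorphisms (projective covers), so the short five lemma forces $P_0/JP_0\to K/JK$ to be an isomorphism. Thus $\psi$ is surjective with superfluous kernel, i.e. a projective cover of $K$. Finally, the snake lemma applied to the map of short exact sequences with vertical maps $\psi,p,q$ (all surjective, so the connecting map vanishes) yields the exact top row $0\to\Omega^1(K)\to\Omega^1(M)\to\Omega^1(N)\to 0$, completing Fig.~3.1.

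For the converse I would run the same top-functor computation backwards. Given Fig.~3.1 with the stated properties, the middle row splits (as $Q_0$ is projective), so $-/J(-)$ keeps it short exact; the three vertical maps are projective covers, hence isomorphisms on tops; and commutativity transports short-exactness of the middle row of tops to the bottom row of tops. In particular $K/JK\to M/JM$ becomes injective, which is precisely $K\cap JM=JK$.

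The single delicate point---and the only place the hypothesis is really used---is establishing that $P_0=\ker\phi$ is the projective cover of $K$ and not merely a projective module surjecting onto $K$. Absent $JK=K\cap JM$, the bottom sequence of tops is only right exact, the short five lemma is unavailable, and $P_0/JP_0\to K/JK$ need only be surjective, allowing $P_0$ to be strictly larger than the true cover; the radical condition is exactly what upgrades this surjection to an isomorphism.
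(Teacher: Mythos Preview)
Your proof is correct, and the key observation---that $JK=K\cap JM$ is equivalent to short-exactness of the sequence of tops---is exactly the one the paper uses. The route you take to Fig.~3.1, however, differs from the paper's in a noteworthy way.

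The paper exploits the standing hypothesis that $R$ is \emph{augmented}: it writes down projective covers explicitly as $P_0=R\otimes_{R/J}K/JK$, $L_0=R\otimes_{R/J}M/JM$, $Q_0=R\otimes_{R/J}N/JN$, observes that the functor $R\otimes_{R/J}(-)$ takes the split-exact sequence of tops $0\to K/JK\to M/JM\to N/JN\to 0$ to an exact sequence $0\to P_0\to L_0\to Q_0\to 0$, and then invokes the $3\times 3$-Lemma to obtain the full diagram. For the converse it identifies the middle row with this tensor construction, recovers the exact sequence of tops, and applies the Snake Lemma to $0\to K\to M\to N\to 0$ mapped onto $0\to K/JK\to M/JM\to N/JN\to 0$ to conclude $0\to JK\to JM\to JN\to 0$ is exact.

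Your approach instead runs a Horseshoe-style construction: start with chosen covers $L_0\to M$ and $Q_0\to N$, manufacture $\phi\colon L_0\to Q_0$ by projectivity, set $P_0=\ker\phi$, and use the short five lemma on tops to certify that $P_0\to K$ is genuinely a projective cover. This buys you two things: the argument never appeals to the explicit formula $R\otimes_{R/J}(X/JX)$ for covers (so the augmented hypothesis is not actually needed), and the commutativity of the diagram is built in rather than asserted. The paper's approach is slicker when the formula is available, since the middle row is obtained in one functorial stroke; yours is more hands-on but more portable.
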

\begin{proof}
$(\Rightarrow)$ By hypothesis, $JK=K\cap JM$, which implies the
exact sequence $$\xymatrix{
  0 \ar[r] & JK \ar[r] & JM\ar[r]  & JN \ar[r] & 0.}$$ Now consider the following diagram with exact
rows and columns
$$\xymatrix{
 & 0 \ar[d] & 0 \ar[d] & 0\ar[d] \\
  0  \ar[r] & JK \ar[d] \ar[r] & JM \ar[d] \ar[r] & JN \ar[d] \ar[r] & 0 \\
  0 \ar[r] & K \ar[r] & M \ar[r] & N \ar[r] & 0,  }$$$$\rm{(Fig. \;3.2)}$$
by the ``Snake-Lemma'', we obtain the following exact sequence
$$\xymatrix{
  0 \ar[r] & K/JK \ar[r] & M/JM\ar[r]  & N/JN \ar[r] & 0.}$$ Note that for any finitely generated $R$-module $X$,
$R\otimes_{R/J}X/JX\longrightarrow X\longrightarrow 0$ is a
projective cover and if a module has projective covers then all
projective covers are unique up to isomorphisms. Now setting
$$P_0:=R\otimes_{R/J}K/JK, \;\;L_0:=R\otimes_{R/J}M/JM\;\;{\rm
and}\;\;Q_0:=R\otimes_{R/J}N/JN,$$ we have the following exact
sequence $$\xymatrix{
  0 \ar[r] & P_0 \ar[r] & L_0\ar[r]  & Q_0 \ar[r] & 0}$$
since $R/J$ is a semisimple algebra. Therefore, we have the
following commutative diagram
$$\xymatrix{
           & 0 \ar[d]        & 0 \ar[d]        & 0 \ar[d] \\
& \Omega^1(K) \ar[d] \ar[r] & \Omega^1(M) \ar[d]
  \ar[r] & \Omega^1(N) \ar[d]  \\
  0 \ar[r] & P_0 \ar[d] \ar[r] & L_0 \ar[d] \ar[r] & Q_0 \ar[d] \ar[r] & 0 \\
  0 \ar[r] & K \ar[d] \ar[r] & M \ar[d] \ar[r] & N \ar[d] \ar[r] & 0, \\
           & 0               & 0               & 0 }$$$$\rm{(Fig. \;3.3)}$$
which implies the desired diagram (Fig. 3.1) since the
``$3\times3$-Lemma''.

$(\Leftarrow)$ Suppose that we have Fig. 3.1. We may assume that
$$P_0:=R\otimes_{R/J}K/JK, \;\;L_0:=R\otimes_{R/J}M/JM\;\;{\rm
and}\;\;Q_0:=R\otimes_{R/J}N/JN$$ since the projective cover of a
module is unique up to isomorphisms. From the middle row of Fig.
3.1, we have the following exact sequence
$$\xymatrix{
  0 \ar[r] & R\otimes_{R/J}K/JK \ar[r] & R\otimes_{R/J}M/JM\ar[r]  & R\otimes_{R/J}N/JN \ar[r] & 0.}$$
Thus, we have the following short exact sequence as $R/J$-modules
$$\xymatrix{
  0 \ar[r] & K/JK \ar[r] & M/JM\ar[r]  & N/JN \ar[r] & 0}$$since $R/J$ is semisimple.
Now consider the following commutative diagram with exact rows and
columns
$$\xymatrix{
0 \ar[r] &K \ar[d] \ar[r] & M \ar[d] \ar[r] & N \ar[d] \ar[r] & 0 \\
  0 \ar[r] &K/JK \ar[d] \ar[r] & M/JM \ar[d] \ar[r] & N/JN \ar[d] \ar[r] & 0. \\
 & 0  & 0 & 0 }$$$$\rm{(Fig. \;3.4)}$$
By the ``Snake-Lemma'' again, we have the exact sequence

$$\xymatrix{
  0 \ar[r] & JK \ar[r] & JM\ar[r]  & JN \ar[r] & 0,}$$
which is equivalent to  $JK=K\cap JM$.
\end{proof}

\begin{lemma}\label{lem2}
Let $\xymatrix{
  0 \ar[r] & K\ar[r] & M\ar[r]  & N \ar[r] & 0}$ be a short exact sequence of finitely generated $R$-modules. Then $J\Omega^i(K)=\Omega^i(K)\cap
 J\Omega^i(M)$ for all $i\geq 0$ if and only if the minimal Horseshoe Lemma
holds.
\end{lemma}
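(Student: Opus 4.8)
The plan is to prove both directions by reducing, one syzygy level at a time, to Lemma \ref{lem1}, which converts the radical condition $JK = K\cap JM$ into the one-step ``horseshoe'' diagram (Fig.\ 3.1) consisting of projective covers together with the induced first-syzygy sequence. The crucial observation is that the top row of Fig.\ 3.1, namely $0\to\Omega^1(K)\to\Omega^1(M)\to\Omega^1(N)\to 0$, is again a short exact sequence of finitely generated $R$-modules of exactly the same shape as the one we started with. Hence Lemma \ref{lem1} can be fed into itself, and an induction will splice the layers into the full diagram Fig.\ 1.1.

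For $(\Rightarrow)$, assume $J\Omega^i(K)=\Omega^i(K)\cap J\Omega^i(M)$ for all $i\ge 0$. Since $\Omega^0(K)=K$, the case $i=0$ reads $JK=K\cap JM$, so Lemma \ref{lem1} supplies the projective covers $P_0\to K$, $L_0\to M$, $Q_0\to N$, the exact columns, and the syzygy sequence $0\to\Omega^1(K)\to\Omega^1(M)\to\Omega^1(N)\to 0$; because $Q_0$ is projective the middle column splits, giving $L_0\cong P_0\oplus Q_0$. Applying Lemma \ref{lem1} to this syzygy sequence, now using the hypothesis at $i=1$, produces the next layer and the sequence $0\to\Omega^2(K)\to\Omega^2(M)\to\Omega^2(N)\to 0$. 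Iterating, an induction on $n$ builds projective covers $P_n\to\Omega^n(K)$, $L_n\to\Omega^n(M)$, $Q_n\to\Omega^n(N)$ with $L_n\cong P_n\oplus Q_n$ and all rows and columns exact, which assemble into Fig.\ 1.1. Minimality of the three resolutions is automatic, since each vertical arrow is a projective cover and hence each differential has image inside the radical of the next projective. Finally, because minimal projective resolutions over a semiperfect ring are unique up to isomorphism, the $\mathcal{P}_*$ and $\mathcal{Q}_*$ I have constructed agree with any prescribed ones in Fig.\ 1.2, so the minimal Horseshoe Lemma holds with respect to $\xi$.

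For $(\Leftarrow)$, assume the minimal Horseshoe Lemma holds, so Fig.\ 1.1 is available with $\mathcal{L}_*$ minimal and $L_n\cong P_n\oplus Q_n$. Reading the syzygies off the columns of Fig.\ 1.1 yields, for each $i\ge 0$, a short exact sequence $0\to\Omega^i(K)\to\Omega^i(M)\to\Omega^i(N)\to 0$; its projective covers $P_i\to\Omega^i(K)$, $L_i\to\Omega^i(M)$, $Q_i\to\Omega^i(N)$ (these are covers precisely because the resolutions are minimal), together with the induced next-syzygy row, form exactly the diagram Fig.\ 3.1 for that sequence. Lemma \ref{lem1} then gives $J\Omega^i(K)=\Omega^i(K)\cap J\Omega^i(M)$ for every $i$.

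The main obstacle will be the bookkeeping in the inductive step of $(\Rightarrow)$: I must verify that the first-syzygy sequence produced by Lemma \ref{lem1} is genuinely the input required at the next stage, so that $J\Omega^i(K)=\Omega^i(K)\cap J\Omega^i(M)$ is the right condition to invoke there, and that the resulting commutative ladders are mutually compatible so that they fuse into a single diagram with exact rows and columns. Checking that minimality persists throughout, i.e.\ that using projective covers at each stage forces every differential to have image inside the radical, is the other point that needs care, though it follows at once from the defining superfluous-kernel property of a projective cover.
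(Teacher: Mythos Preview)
Your proof is correct and follows essentially the same approach as the paper: both directions reduce level-by-level to Lemma~\ref{lem1}, using the top row of each Fig.~3.1 instance as the input for the next step, and then splice the resulting one-step diagrams into Fig.~1.1 (or conversely decompose Fig.~1.1 into such pieces). Your version is simply more explicit about the inductive bookkeeping and the uniqueness of minimal resolutions, points the paper leaves implicit.
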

\begin{proof}
$(\Rightarrow)$ By Lemma \ref{lem1}, $J\Omega^i(K)=\Omega^i(K)\cap
 J\Omega^i(M)$ for all $i\geq 0$ if and only if for all $i\geq 0$, we have the
following commutative diagram with exact rows and columns

$$\xymatrix{
           & 0 \ar[d]        & 0 \ar[d]        & 0 \ar[d] \\
  0 \ar[r] & \Omega^{i+1}(K) \ar[d] \ar[r] & \Omega^{i+1}(M) \ar[d]
  \ar[r] & \Omega^{i+1}(N) \ar[d] \ar[r] & 0 \\
  0 \ar[r] & P_i \ar[d] \ar[r] & L_i \ar[d] \ar[r] & Q_i \ar[d] \ar[r] & 0 \\
  0 \ar[r] & \Omega^i(K) \ar[d] \ar[r] & \Omega^i(M) \ar[d] \ar[r] & \Omega^i(N) \ar[d] \ar[r] & 0, \\
           & 0               & 0               & 0 }$$$$\rm{(Fig. \;3.5)}$$
such that  $P_i$, $L_i$ and $Q_i$ are projective covers of
$\Omega^i(K)$, $\Omega^i(M)$ and $\Omega^i(N)$, respectively. Now
putting these commutative diagrams together, we obtain the
commutative diagram (Fig. 1.2), i.e., the minimal Horseshoe Lemma
holds.

$(\Leftarrow)$ Suppose that the minimal Horseshoe Lemma is true for
the exact sequence
$$\xymatrix{
  0 \ar[r] & K\ar[r] & M\ar[r]  & N \ar[r] & 0,}$$i.e., we have the
commutative diagram (Fig. 1.2). Then Fig. 1.2 can be divided into a
lot of commutative diagrams similar to Fig. 3.5. Now by Lemma
\ref{lem1}, we get the desired equations.
\end{proof}

\begin{lemma}\label{lem3}

Let $\xymatrix{\xi:
  0 \ar[r] & K\ar[r] & M\ar[r]  & N \ar[r] & 0  }$ be an exact sequence
in $\mathcal{Q}^{\delta}(R)$. Then the following statements are
equivalent:
\begin{enumerate}
\item $JK=K\cap JM$;

\item $\xymatrix{
  0 \ar[r] & JK \ar[r] & JM\ar[r]  & JN \ar[r] &
  0}$ is exact;

\item $\xymatrix{
  0 \ar[r] & K/JK \ar[r] & M/JM\ar[r]  & N/JN \ar[r] &
  0}$ is exact;

\item $R/J\otimes_RK\rightarrow R/J\otimes_RM$ is a monomorphism;

\item the minimal Horseshoe Lemma holds with respect to $\xi$. \end{enumerate}
\end{lemma}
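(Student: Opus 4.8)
My plan is to split the five conditions into two groups. The chain $(1)\Leftrightarrow(2)\Leftrightarrow(3)\Leftrightarrow(4)$ holds for an arbitrary short exact sequence $\xi\colon 0\to K\to M\to N\to 0$ of finitely generated $R$-modules and makes no use of quasi-$\delta$-Koszulity, so I would dispose of it first; the genuine content is $(1)\Leftrightarrow(5)$. For $(1)\Leftrightarrow(2)$, note that $0\to JK\to JM\to JN\to 0$ is always a complex with injective left map and surjective right map (since $g(JM)=Jg(M)=JN$ for the epimorphism $g\colon M\to N$), and its homology at $JM$ is $(K\cap JM)/JK$; hence it is exact precisely when $JK=K\cap JM$. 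This is the computation already made inside Lemma~\ref{lem1}. The equivalence $(2)\Leftrightarrow(3)$ comes from the Snake Lemma applied to the $3\times 3$ diagram with columns $0\to JX\to X\to X/JX\to 0$ for $X\in\{K,M,N\}$ and exact middle row $\xi$, exactly the passage between Fig.~3.2, Fig.~3.3 and Fig.~3.4 in Lemma~\ref{lem1}. Finally, since $R/J\otimes_R X\cong X/JX$ naturally and $R/J\otimes_R-$ is right exact, the sequence $K/JK\to M/JM\to N/JN\to 0$ is automatically exact, so $(3)$ is equivalent to injectivity of $K/JK\to M/JM$, which is $(4)$.

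For $(1)\Leftrightarrow(5)$ I would route through Lemma~\ref{lem2}, which asserts that the minimal Horseshoe Lemma holds iff $J\Omega^i(K)=\Omega^i(K)\cap J\Omega^i(M)$ for all $i\geq 0$; the case $i=0$ gives $(5)\Rightarrow(1)$ immediately. The work is $(1)\Rightarrow(5)$: starting from only the $i=0$ equation $JK=K\cap JM$, I must propagate it to all syzygy levels, and here quasi-$\delta$-Koszulity of $K$, $M$ and $N$ is indispensable. I would induct on $i$, carrying the joint statement ``$J\Omega^i(K)=\Omega^i(K)\cap J\Omega^i(M)$ and $0\to\Omega^i(K)\to\Omega^i(M)\to\Omega^i(N)\to 0$ is exact'', the base case being $(1)$ and $\xi$. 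Given the level-$i$ equation and exactness, Lemma~\ref{lem1} produces Fig.~3.5 at level $i$: the $(i+1)$-st syzygy sequence is exact, $P_i,L_i,Q_i$ are the projective covers of $\Omega^i(K),\Omega^i(M),\Omega^i(N)$ (so $P_i$ and $L_i$ are exactly the $i$-th terms of the minimal resolutions of $K$ and $M$), and the split sequence $0\to P_i\to L_i\to Q_i\to 0$ gives $L_i\cong P_i\oplus Q_i$ with $\Omega^{i+1}(K)\subseteq P_i\subseteq L_i$.

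The key computation, which I expect to be the main obstacle, then runs as follows. Put $c=\delta(i+1)-\delta(i)\geq 1$ and take $x\in\Omega^{i+1}(K)\cap J\Omega^{i+1}(M)$. Quasi-$\delta$-Koszulity of $M$ gives $J\Omega^{i+1}(M)=\Omega^{i+1}(M)\cap J^{c+1}L_i$, so $x\in J^{c+1}L_i=J^{c+1}P_i\oplus J^{c+1}Q_i$; as $x\in\Omega^{i+1}(K)\subseteq P_i$ its $Q_i$-component is zero, whence $x\in J^{c+1}P_i$. Quasi-$\delta$-Koszulity of $K$ then gives $\Omega^{i+1}(K)\cap J^{c+1}P_i=J\Omega^{i+1}(K)$, so $x\in J\Omega^{i+1}(K)$; the reverse inclusion being trivial, the level-$(i+1)$ equation holds and the induction closes, after which Lemma~\ref{lem2} yields $(5)$. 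The two points that must be kept honest are that the decomposition $L_i\cong P_i\oplus Q_i$ is the one coming from Fig.~3.5, so that $\Omega^{i+1}(K)\hookrightarrow\Omega^{i+1}(M)$ really lands in the $P_i$ summand, and that the $P_i$, $L_i$ entering the two quasi-$\delta$-Koszul conditions are genuinely the minimal-resolution terms of $K$ and $M$; both are guaranteed by the projective-cover assertions of Lemma~\ref{lem1}.
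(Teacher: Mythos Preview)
Your proof is correct and follows the paper's overall architecture: the equivalences $(1)\Leftrightarrow(2)\Leftrightarrow(3)\Leftrightarrow(4)$ are handled exactly as in the paper (Snake Lemma between Fig.~3.2 and Fig.~3.4, and the identification $R/J\otimes_RX\cong X/JX$), and $(1)\Leftrightarrow(5)$ is reduced to Lemma~\ref{lem2} with the same inductive scheme on the syzygy level.

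The only point of divergence is the presentation of the inductive step in $(1)\Rightarrow(5)$. The paper argues diagrammatically: it records the embeddings $\Omega^{i+1}(K)\hookrightarrow J^{c}P_i$ and $\Omega^{i+1}(M)\hookrightarrow J^{c}L_i$ coming from quasi-$\delta$-Koszulity, applies $R/J\otimes_R-$, observes that the resulting vertical maps are monomorphisms (this is the second quasi-$\delta$-Koszul condition, read through the equivalence $(1)\Leftrightarrow(4)$) and that the bottom horizontal map is mono because $0\to P_i\to L_i\to Q_i\to 0$ splits, and then deduces that $R/J\otimes_R\Omega^{i+1}(K)\to R/J\otimes_R\Omega^{i+1}(M)$ is mono, which by $(4)\Rightarrow(1)$ gives the next syzygy identity. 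Your element chase is the unwound version of this diagram: the equality $J\Omega^{i+1}(M)=\Omega^{i+1}(M)\cap J^{c+1}L_i$ is the paper's mono $\gamma_{i+1}$, the splitting $J^{c+1}L_i=J^{c+1}P_i\oplus J^{c+1}Q_i$ is the injectivity of the paper's bottom row, and $J\Omega^{i+1}(K)=\Omega^{i+1}(K)\cap J^{c+1}P_i$ is the paper's mono $\alpha_{i+1}$. Your route is slightly more self-contained in that it avoids the tensor functor and the self-reference to $(1)\Leftrightarrow(4)$; the paper's version makes the role of $R/J\otimes_R-$ more visible and reuses the earlier equivalences. In both arguments only the quasi-$\delta$-Koszulity of $K$ and $M$ is actually invoked at this step, so your remark that all three are ``indispensable'' is a slight overstatement, though harmless.
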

\begin{proof}
(1)$\Rightarrow$(2) and (2)$\Rightarrow$(3) have been proved in the
proof of Lemma \ref{lem2}.

(3)$\Rightarrow$(4) Consider the following commutative diagram:
$$\xymatrix{
  0  \ar[r]  & K/JK \ar[d]_{\cong} \ar[r] & M/JM \ar[d]^{\cong} \\
& R/J\otimes_RK \ar[r]  & R/J\otimes_RM,}$$$$\rm{(Fig.
\;3.6)}$$which implies that $R/J\otimes_RK\rightarrow R/J\otimes_RM$
is a monomorphism.

(4)$\Rightarrow$(1) Consider the following commutative diagram with
exact rows and columns:
$$\xymatrix{
  & 0 \ar[d]  & 0 \ar[d] & 0 \ar[d]  \\
0 \ar[r] & K \ar[d] \ar[r]  & M \ar[d] \ar[r] & N \ar[d] \ar[r]  & 0 \\
  0 \ar[r]  &K/JK \ar[d]_{\cong} \ar[r] & M/JM \ar[d]_{\cong} \ar[r]  & N/JN \ar[d]_{\cong} \ar[r]  & 0 \\
 0  \ar[r]  & R/J\otimes_RK \ar[d] \ar[r]  & R/J\otimes_RM \ar[d] \ar[r] & R/J\otimes_RN \ar[d] \ar[r]  & 0, \\
    & 0   &0   & 0  }$$$$\rm{(Fig. \;3.7)}$$
which implies that $JK=K\cap JM$ since the ``Five-Lemma'' and the
the following commutative diagram
$$\xymatrix{
  0  \ar[r] & JK \ar[d] \ar[r] & JM \ar[d]_{=} \ar[r] & JN \ar[d]_{=} \ar[r] & 0 \\
  0 \ar[r] & K\cap JM \ar[r] & JM \ar[r] & JN \ar[r] & 0.   }$$$$\rm{(Fig. \;3.8)}$$

(1)$\Rightarrow$(5) By Lemma \ref{lem1}, we have Fig. 3.1 since
$JK=K\cap JM$, thus we have the following commutative diagram with
exact rows
$$\xymatrix{
           & 0 \ar[d]        & 0 \ar[d]        & 0 \ar[d] \\
  0 \ar[r] & \Omega^1(K) \ar[d] \ar[r] & \Omega^1(M) \ar[d]
  \ar[r] & \Omega^1(N) \ar[d] \ar[r] & 0 \\
  0 \ar[r] & J^{\delta(1)-\delta(0)}P_0 \ar[r] & J^{\delta(1)-\delta(0)}Q_0  \ar[r] & J^{\delta(1)-\delta(0)}L_0 \ar[r] & 0}$$$$\rm{(Fig.
  \;3.9)}$$since $K$, $M$ and $N$ are quasi-$\delta$-Koszul modules. Now applying the functor $R/J\otimes_R-$ to Fig. 3.9, we have
the following commutative diagram with exact rows
$$\xymatrix{
           & 0 \ar[d]        & 0 \ar[d]        & 0 \ar[d] \\
& R/J\otimes_R\Omega^1(K) \ar[d]^{\alpha_1} \ar[r]^{\beta_1} &
R/J\otimes_R\Omega^1(M)
  \ar[d]^{\gamma_1}
  \ar[r] & R/J\otimes_R\Omega^1(N) \ar[d] \ar[r] & 0 \\
  0 \ar[r] & R/J\otimes_RJ^{\delta(1)-\delta(0)}P_0 \ar[r] & R/J\otimes_RJ^{\delta(1)-\delta(0)}Q_0  \ar[r] & R/J\otimes_RJ^{\delta(1)-\delta(0)}L_0 \ar[r] &
  0,}$$$$\rm{(Fig.
  \;3.10)}$$where $\alpha_1$ and $\gamma_1$ are monomorphisms since $K$, $M$ are in $\mathcal{Q}^{\delta}(R)$ and
  (1)$\Leftrightarrow$(4), which implies that
  $\beta_1$ is also a monomorphism induced by the commutativity of the left
square. By (1)$\Leftrightarrow$(4), we have
$J\Omega^1(K)=\Omega^1(K)\cap J\Omega^1(M)$. By Lemma \ref{lem1}
again, we have Fig. 3.5 in the case of $i=1$, which implies the
following commutative diagram with exact rows and columns
$$\xymatrix{
           & 0 \ar[d]        & 0 \ar[d]        & 0 \ar[d] \\
  0 \ar[r] & \Omega^2(K) \ar[d] \ar[r] & \Omega^2(M) \ar[d]
  \ar[r] & \Omega^2(N) \ar[d] \ar[r] & 0 \\
  0 \ar[r] & J^{\delta(2)-\delta(1)}P_1 \ar[r] & J^{\delta(2)-\delta(1)}Q_1  \ar[r] & J^{\delta(2)-\delta(1)}L_1 \ar[r] &
  0}$$$$\rm{(Fig.
  \;3.11)}$$since $K,\;M$ and $N$ are quasi-$\delta$-Koszul modules. Similar to the above, we have the
following commutative diagram with exact rows
$$\xymatrix{
           & 0 \ar[d]        & 0 \ar[d]        & 0 \ar[d] \\
& R/J\otimes_R\Omega^2(K) \ar[d]^{\alpha_2} \ar[r]^{\beta_2} &
R/J\otimes_R\Omega^2(M)
  \ar[d]^{\gamma_2}
  \ar[r] & R/J\otimes_R\Omega^2(N) \ar[d] \ar[r] & 0 \\
  0 \ar[r] & R/J\otimes_RJ^{\delta(2)-\delta(1)}P_1 \ar[r] & R/J\otimes_RJ^{\delta(2)-\delta(1)}Q_1  \ar[r] & R/J\otimes_RJ^{\delta(2)-\delta(1)}L_1 \ar[r] &
  0}$$$$\rm{(Fig.
  \;3.12)}$$and
$J\Omega^2(K)=\Omega^2(K)\cap J\Omega^2(M)$.

Now repeating the above procedures, we have
$J\Omega^n(K)=\Omega^n(K)\cap J\Omega^n(M)$ since the following
commutative diagram with exact rows

$$\xymatrix@C=0.8em{
           & 0 \ar[d]        & 0 \ar[d]        & 0 \ar[d] \\
  0 \ar[r] & R/J\otimes_R\Omega^n(K) \ar[d]^{\alpha_n} \ar[r]^{\beta_n} & R/J\otimes_R\Omega^n(M)
  \ar[d]^{\gamma_n}
  \ar[r] & R/J\otimes_R\Omega^n(N) \ar[d] \ar[r] & 0 \\
  0 \ar[r] & R/J\otimes_RJ^{\delta(n)-\delta(n-1)}P_{n-1} \ar[r] & R/J\otimes_RJ^{\delta(n)-\delta(n-1)}Q_{n-1}  \ar[r] & R/J\otimes_RJ^{\delta(n)-\delta(n-1)}L_{n-1} \ar[r] &
  0}$$$$\rm{(Fig.
  \;3.13)}$$
 for all $n\geq 3$. Now by Lemma \ref{lem2}, we finish the proof of (1)$\Rightarrow$(5).

(5)$\Rightarrow$(1) By Lemma \ref{lem2}, (5) is equivalent to
$J\Omega^i(K)=\Omega^i(K)\cap
 J\Omega^i(M)$ for all $i\geq 0$. In particular, let $i=0$, we have $JK=K\cap
 JM$.
\end{proof}

Now by Lemma \ref{lem3} and note that

$\{\rm Quasi$-$\rm Koszul \;modules\}\subseteq\{\rm Quasi$-$d$-$\rm
Koszul \;modules\} \subseteq\{\rm Quasi$-$\rm piecewise$-$\rm Koszul\\
\;modules\}\subseteq\{\rm Quasi$-$\delta$-$\rm Koszul \;modules\}$,
Theorem A and Corollary B are obvious.

\medskip
\section{Some applications of minimal Horseshoe Lemma}
In this section, we will give some applications of minimal Horseshoe
Lemma. More precisely, we will prove Theorems C and D.

\begin{lemma}\label{lem4} Let $R$ be an augmented Noetherian semiperfect algebra with
Jacobson radical $J$ and $\xymatrix{0 \ar[r] & K \ar[r] & M\ar[r] &
N \ar[r] & 0}$ be a short exact sequence in the category of finitely
generated $R$-modules with $JK=K\cap JM$. Then $M$ is projective if
and only if $K$ and $N$ are both projective.
\end{lemma}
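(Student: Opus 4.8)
The plan is to treat the two implications separately, with essentially all of the content living in the forward direction.

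For the easy direction ($\Leftarrow$), suppose $K$ and $N$ are both projective. Since $N$ is projective, the surjection $M\to N$ splits, so $M\cong K\oplus N$ is a direct sum of projectives and hence projective. Note that this direction uses neither the semiperfectness of $R$ nor the hypothesis $JK=K\cap JM$; it is the reverse implication that carries the real weight.

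For the forward direction ($\Rightarrow$), assume $M$ is projective. The idea is to feed the hypothesis $JK=K\cap JM$ into Lemma~\ref{lem1}, which produces the commutative diagram (Fig.~3.1) with exact rows and columns, whose middle column $0\to\Omega^1(M)\to L_0\to M\to 0$ realizes the projective cover $L_0\to M$ and whose top row $0\to\Omega^1(K)\to\Omega^1(M)\to\Omega^1(N)\to 0$ is exact. I would then observe that, since $M$ is projective and projective covers of finitely generated modules over a semiperfect algebra are unique up to isomorphism, the projective cover of $M$ is $M$ itself; thus $L_0\cong M$ and $\Omega^1(M)=0$. Plugging $\Omega^1(M)=0$ into the exact top row collapses it to $0\to\Omega^1(K)\to 0\to\Omega^1(N)\to 0$, forcing $\Omega^1(K)=0$ and $\Omega^1(N)=0$. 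Hence the projective covers $P_0\to K$ and $Q_0\to N$ are isomorphisms, so $K\cong P_0$ and $N\cong Q_0$ are projective, as desired.

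The main (and really the only) obstacle is obtaining the short exact sequence of first syzygies $0\to\Omega^1(K)\to\Omega^1(M)\to\Omega^1(N)\to 0$: this is exactly where the hypothesis $JK=K\cap JM$ is indispensable, since without it the three projective covers need not assemble into a short exact sequence (equivalently, the induced syzygy sequence may fail to be left exact), and then the vanishing of $\Omega^1(M)$ would no longer force the vanishing of $\Omega^1(K)$ and $\Omega^1(N)$. Since Lemma~\ref{lem1} packages precisely this, the step I would be most careful to cite correctly is the application of that lemma, together with the uniqueness of projective covers in the semiperfect setting used to identify $\Omega^1(M)$ with $0$.
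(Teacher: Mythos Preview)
Your proof is correct. The forward direction matches the paper's argument exactly: invoke Lemma~\ref{lem1} to obtain Fig.~3.1, use that the projective cover of the projective module $M$ is $M$ itself so $\Omega^1(M)=0$, and read off $\Omega^1(K)=\Omega^1(N)=0$ from the top row. (The paper's write-up of this direction only mentions $\Omega^1(N)=0$ explicitly, but of course the same exact row kills $\Omega^1(K)$ too; you state this more carefully.)

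The one genuine difference is in the backward direction. You argue that projectivity of $N$ splits the sequence, so $M\cong K\oplus N$ is projective---a one-line argument that, as you note, uses neither semiperfectness nor the hypothesis $JK=K\cap JM$. The paper instead reuses the Fig.~3.1 machinery symmetrically: from $\Omega^1(K)=\Omega^1(N)=0$ and the exact top row it concludes $\Omega^1(M)=0$, hence $M\cong L_0$ is projective. Your route is strictly more elementary and reveals that the extra hypotheses are only needed for the $(\Rightarrow)$ implication; the paper's route has the aesthetic virtue of handling both directions uniformly via the syzygy sequence, but at the cost of invoking hypotheses that are not actually required for $(\Leftarrow)$.
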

\begin{proof}
$(\Rightarrow)$ By Lemma \ref{lem1}, we have Fig. 3.1, which implies
the following exact sequence
$$\xymatrix{0 \ar[r] & \Omega^1(K) \ar[r] & \Omega^1(M)\ar[r]  & \Omega^1(N)
\ar[r] & 0.}$$ By hypothesis, $M$ is a projective $R$-modules, thus
the projective cover of $M$ is itself. Hence we have
$\Omega^1(M)=0$. Now combining the above exact sequence, we have
$\Omega^1(N)=0$, which implies that $Q_0\cong N$ in Fig. 3.1, thus
$N$ is a projective $R$-module.

$(\Leftarrow)$ Assume that $K$ and $N$ are projective $R$-modules,
repeating the same argument as in the proof of the necessity, we
have $\Omega^1(K)=\Omega^1(N)=0$ since $K$ and $N$ are projective
$R$-modules, which implies that $\Omega^1(M)=0$ and hence $M$ is a
projective $R$-module.
\end{proof}
\begin{lemma}\label{lem5}
Let $R$ be a Noetherian semiperfect algebra with Jacobson radical
$J$ and $M$ a finitely generated $R$-module. Then the length of a
minimal projective resolution of $M$, denoted by $l$, equals to the
projective dimension of $M$, pd$(M)$.
\end{lemma}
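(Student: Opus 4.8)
The plan is to identify the length $l$ with the supremum of the degrees in which $\Tor^R_*(R/J,M)$ is nonzero, and then to exploit that this supremum does not depend on the chosen resolution. One inequality is immediate: the minimal projective resolution is itself a projective resolution of $M$ of length $l$, so by the definition of projective dimension we have pd$(M)\le l$. The substance of the argument is the reverse inequality $l\le$ pd$(M)$.

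First I would record the defining feature of a minimal resolution over a semiperfect ring. Writing the minimal resolution as $\cdots\to P_n\xrightarrow{d_n}P_{n-1}\to\cdots\to P_0\to M\to 0$, built by successively taking projective covers $P_n\to\Omega^n(M)$, I use that the kernel of a projective cover is a superfluous submodule, and that every superfluous submodule of a projective module $P$ is contained in its radical $JP$. This gives $\mathrm{im}(d_n)=\Omega^n(M)\subseteq JP_{n-1}$ for every $n\ge 1$, so that after applying $R/J\otimes_R-$ each induced differential $d_n\otimes\mathrm{id}$ is zero.

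Next I would compute Tor from this complex: since all induced differentials vanish, $\Tor^R_n(R/J,M)\cong R/J\otimes_R P_n=P_n/JP_n$ for every $n$. By Nakayama's lemma, $P_n/JP_n=0$ forces $P_n=0$, and conversely $P_n=0$ trivially gives $P_n/JP_n=0$; hence $P_n=0$ if and only if $\Tor^R_n(R/J,M)=0$. This yields $l=\sup\{n:\Tor^R_n(R/J,M)\neq 0\}$, an invariant making no reference to the particular resolution used.

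Finally, if pd$(M)=p<\infty$, then $M$ admits a projective resolution concentrated in degrees $\le p$; computing Tor from that resolution gives $\Tor^R_n(R/J,M)=0$ for all $n>p$, whence $P_n=0$ for $n>p$ and $l\le p=$ pd$(M)$. (If pd$(M)=\infty$ there is nothing to prove.) Combining the two inequalities gives $l=$ pd$(M)$. I expect the only delicate point to be the superfluous-kernel argument that confines the differentials inside $J$; once that is secured, the identification of $l$ with the top nonvanishing Tor degree, together with the resolution-independence of Tor, finishes the proof.
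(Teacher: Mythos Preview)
Your proposal is correct and follows essentially the same approach as the paper: both obtain the nontrivial inequality $l\le\mathrm{pd}(M)$ by observing that a projective resolution of length strictly less than $l$ would force $\Tor^R_l(R/J,M)=0$, contradicting $\Tor^R_l(R/J,M)\cong P_l/JP_l\neq 0$ computed from the minimal resolution. You simply spell out what the paper leaves implicit---the superfluous-kernel argument showing $\mathrm{im}(d_n)\subseteq JP_{n-1}$ and the Nakayama step---whereas the paper compresses this into a one-line contradiction (and throws in an unnecessary $\Ext$ reference).
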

\begin{proof}
By hypothesis, $M$ has a minimal projective resolution of length
$l$, we have pd$(M)\leq l$ since a minimal projective resolution is
in particular a projective resolution. But if there would be a
minimal resolution of $M$ of length strictly less than $l$, then we
have $\Ext^l_R(M, R/J)\cong \Tor^R_l(R/J,M)=0$, which is a
contradiction.
\end{proof}
\begin{lemma}\label{lem6}
Let $R$ be an augmented Noetherian semiperfect algebra with Jacobson
radical $J$ and $\xymatrix{
  0 \ar[r] & K \ar[r] & M\ar[r]  & N \ar[r] & 0}$be a short exact sequence in
the category of finitely generated $R$-modules. If the minimal
Horseshoe Lemma holds for $\xi$, then we have $pd(M)=\max\{pd(K),
pd(N)\}$.
\end{lemma}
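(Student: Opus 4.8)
The plan is to read the three projective dimensions off the lengths of the relevant minimal projective resolutions, via Lemma \ref{lem5}, and then exploit the degreewise splitting produced by the minimal Horseshoe Lemma. First I would invoke the hypothesis: since the minimal Horseshoe Lemma holds for $\xi$, we may complete the given data into the commutative diagram (Fig. 1.1) in which $\mathcal{P}_*$, $\mathcal{L}_*$, $\mathcal{Q}_*$ are the minimal projective resolutions of $K$, $M$, $N$ respectively, and all rows and columns are exact. In particular, for each $n\geq 0$ we obtain a short exact sequence $0\rightarrow P_n\rightarrow L_n\rightarrow Q_n\rightarrow 0$ of projective $R$-modules.

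Next I would observe that, because $Q_n$ is projective, this short exact sequence splits, so that $L_n\cong P_n\oplus Q_n$ as $R$-modules for every $n\geq 0$. This is the crucial structural input, since it ties the vanishing of $L_n$ to the simultaneous vanishing of $P_n$ and $Q_n$: concretely, $L_n=0$ if and only if $P_n=0$ and $Q_n=0$.

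Then I would translate projective dimensions into resolution lengths. By Lemma \ref{lem5}, the numbers $pd(K)$, $pd(M)$, $pd(N)$ equal the lengths of $\mathcal{P}_*$, $\mathcal{L}_*$, $\mathcal{Q}_*$ respectively, where the length of a minimal projective resolution is the largest index $n$ with a nonzero term (and is $\infty$ when no such bound exists). Using $L_n\cong P_n\oplus Q_n$, the largest $n$ with $L_n\neq 0$ equals the maximum of the largest $n$ with $P_n\neq 0$ and the largest $n$ with $Q_n\neq 0$; hence the length of $\mathcal{L}_*$ is the maximum of the lengths of $\mathcal{P}_*$ and $\mathcal{Q}_*$, and Lemma \ref{lem5} then yields $pd(M)=\max\{pd(K),pd(N)\}$.

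I do not expect a genuine obstacle once the minimal Horseshoe Lemma is assumed; the only point needing a little care is the bookkeeping when one of $pd(K)$, $pd(N)$ is infinite, which is handled by the usual convention that $\infty$ dominates the maximum, so that $\mathcal{L}_*$ has unbounded length exactly when one of $\mathcal{P}_*$, $\mathcal{Q}_*$ does. It is worth emphasising that the real content has already been absorbed into the hypothesis: by Lemma \ref{lem3} the minimal Horseshoe Lemma is equivalent to $JK=K\cap JM$, and it is precisely this (rather than an arbitrary Horseshoe completion) that guarantees the degreewise-split diagram (Fig. 1.1) on which the length comparison rests.
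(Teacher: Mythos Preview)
Your proof is correct and follows essentially the same route as the paper's: obtain $L_n\cong P_n\oplus Q_n$ from the minimal Horseshoe diagram and read off the equality of lengths via Lemma~\ref{lem5}, handling the finite and infinite cases together. One small caveat on your closing remark: Lemma~\ref{lem3} gives the equivalence of the minimal Horseshoe Lemma with $JK=K\cap JM$ only inside $\mathcal{Q}^{\delta}(R)$, whereas Lemma~\ref{lem6} is stated for arbitrary finitely generated modules (the general equivalence, Lemma~\ref{lem2}, requires the condition at every syzygy), but this aside does not affect the validity of your argument.
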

\begin{proof}
By hypothesis the minimal Horseshoe Lemma holds, i.e., we have Fig.
1.2. More precisely, we obtain that
$$\xymatrix{
  \cdots \ar[r]  & P_2 \ar[r] & P_1 \ar[r] & P_0 \ar[r]  & K \ar[r] & 0,  }$$
$$\xymatrix{
  \cdots \ar[r]  &L_2 \ar[r] & L_1 \ar[r] &L_0 \ar[r]  & M \ar[r] & 0  }$$
and
$$\xymatrix{
  \cdots \ar[r]  & Q_2 \ar[r] & Q_1 \ar[r] & Q_0 \ar[r]  & N \ar[r] & 0  }$$
are minimal projective resolution of $K$, $M$ and $N$, respectively,
and $L_n=P_n\oplus Q_n$ for all $n\geq 0$.

If pd$(M)=\infty$, by Lemma \ref{lem5}, there exists an infinite
minimal graded projective resolution of $M$
$$\xymatrix{
  \cdots \ar[r]&L_n\ar[r]&\cdots \ar[r]  & L_2 \ar[r] & L_1 \ar[r] &L_0 \ar[r]  & M \ar[r] & 0.}$$
Note that we have $L_n=P_n\oplus Q_n$ for all $n\geq 0$ and the
minimal projective resolution of a module is unique up to
isomorphisms. Thus at least one of the lengths of
$$\xymatrix{
  \cdots \ar[r]&P_n\ar[r]&\cdots \ar[r]  & P_2 \ar[r] & P_1 \ar[r] &P_0 \ar[r]  & K \ar[r] & 0
  }$$ and $$\xymatrix{
  \cdots \ar[r]&Q_n\ar[r]&\cdots \ar[r]  & Q_2 \ar[r] & Q_1 \ar[r] &Q_0 \ar[r]  & N \ar[r] & 0
  }$$is infinite, which implies that pd$(M)=\max\{pd(K), pd(N)\}$.

If pd$(M)=n<\infty$, by Lemma \ref{lem5}, there exists a minimal
projective resolution of $M$ of length $n$:
$$\xymatrix{
  0 \ar[r]&L_n\ar[r]&\cdots \ar[r]  & L_2 \ar[r] & L_1 \ar[r] &L_0 \ar[r]  & M \ar[r] & 0,}$$
which implies that $K$ and $N$ possess the following minimal
projective resolutions
$$\xymatrix{
 0 \ar[r]&P_n\ar[r]&\cdots \ar[r]  & P_2 \ar[r] & P_1 \ar[r] &P_0 \ar[r]  & K \ar[r] &
 0,
  }$$ $$\xymatrix{
 0 \ar[r]&Q_n\ar[r]&\cdots \ar[r]  & Q_2 \ar[r] & Q_1 \ar[r] &Q_0 \ar[r]  & N \ar[r] & 0
  }$$ such that at least one of $P_n$ and $L_n$ isn't zero, which implies that pd$(M)=\max\{pd(K), pd(N)\}$ by Lemma \ref{lem5}.
\end{proof}

Now it is easy to see that Theorem C is immediate from Lemmas
\ref{lem4} and \ref{lem6}.

With the help of Theorem A and Lemma \ref{lem3}, we can prove
Theorem D directly.
\begin{proof}
(1) By Theorem A, we have Fig. 3.5 for all $i\geq0$, which implies
the following commutative diagram with exact rows and columns for
all $i\geq0$:

$$\xymatrix{
           & 0 \ar[d]        & 0 \ar[d]        & 0 \ar[d] \\
  0 \ar[r] & \Omega^{i+1}(K) \ar[d] \ar[r] & \Omega^{i+1}(M)
  \ar[d]
  \ar[r] & \Omega^{i+1}(N) \ar[d] \ar[r] & 0 \\
  0 \ar[r] & JP_i \ar[r] & JL_i  \ar[r] & JQ_i \ar[r] &
  0.}$$$$\rm{(Fig.
  \;4.1)}$$
Now applying the additive right functor $R/J\otimes_R-$ to Fig. 4.1,
we get the following commutative diagram with exact rows and columns
for all $i\geq0$:
$$\xymatrix{
             \\
 R/J\otimes_R\Omega^{i+1}(K) \ar[d]_{\beta_{i+1}}
\ar[r]^{\alpha_{i+1}} & R/J\otimes_R\Omega^{i+1}(M)
  \ar[d]_{\gamma_{i+1}}
  \ar[r] & R/J\otimes_R\Omega^{i+1}(N) \ar[d] \ar[r] & 0 \\
R/J\otimes_RJP_i \ar[r]^{\delta_{i+1}} & R/J\otimes_RJL_i  \ar[r] &
R/J\otimes_RJQ_i \ar[r] &
  0,}$$$$\rm{(Fig.
  \;4.2)}$$
where $\delta_{i+1}$ is a monomorphism for all $i\geq0$ since the
exact sequence
$$\xymatrix{0 \ar[r] & JP_i \ar[r] &JL_i\ar[r]  & JQ_i \ar[r] & 0}$$
is split, and $\gamma_{i+1}$ is a monomorphism for all $i\geq0$
since $M$ is a quasi-Koszul module and Lemma \ref{lem3}.

Now we claim that $\beta_{i+1}$ is a monomorphism for all $i\geq0$.
In fact, by the hypothesis, the minimal Horseshoe Lemma holds for
the given exact sequence $\xi$, by Lemma \ref{lem2}, we have
$J\Omega^i(K)=\Omega^i(K)\cap J\Omega^i(M)$ for all $i\geq 0$. By
Lemma \ref{lem3}, $\alpha_{i+1}$ is a monomorphism for all $i\geq0$,
which implies $\beta_{i+1}$ is a monomorphism for all $i\geq0$ since
the left above square is commutative. By Lemma \ref{lem3}, we have
$J\Omega^{i+1}(K)=\Omega^{i+1}(K)\cap J^2P_i$ for all $i\geq0$,
which imply that $K$ is a quasi-Koszul module.

(2) Similarly, we have Fig. 3.5 for all $i\geq0$. Since the minimal
Horseshoe Lemma is true for $\xi$, then by Lemma \ref{lem2}, we have
$J\Omega^i(K)=\Omega^i(K)\cap J\Omega^i(M)$ for all $i\geq 0$. By
Lemma \ref{lem3}, we have the following exact sequence
$$\xymatrix{0 \ar[r] & J\Omega^i(K) \ar[r] &J\Omega^i(M)\ar[r]  & J\Omega^i(N) \ar[r] & 0}$$
for all $i\geq 0$.

Now note that all the columns are projective covers, which imply the
following commutative diagram with exact rows and columns for all
$i\geq0$:

$$\xymatrix{
           & 0 \ar[d]        & 0 \ar[d]        & 0 \ar[d] \\
  0 \ar[r] & \Omega^{i+1}(K) \ar[d] \ar[r] & \Omega^{i+1}(M) \ar[d]
  \ar[r] & \Omega^{i+1}(N) \ar[d] \ar[r] & 0 \\
  0 \ar[r] & JP_i \ar[d] \ar[r] & JL_i \ar[d] \ar[r] & JQ_i \ar[d] \ar[r] & 0 \\
  0 \ar[r] & J\Omega^i(K) \ar[d] \ar[r] & J\Omega^i(M) \ar[d] \ar[r] & J\Omega^i(N) \ar[d] \ar[r] & 0, \\
           & 0               & 0               & 0 }$$$$\rm{(Fig. \;4.3)}$$

Now applying the additive right functor $R/J\otimes_R-$ to Fig. 4.3,
we get the following commutative diagram with exact rows and columns
for all $i\geq0$:
$$\xymatrix{
R/J\otimes_R\Omega^{i+1}(K) \ar[d]_{\epsilon_{i+1}}
\ar[r]^{\varepsilon_{i+1}} & R/J\otimes_R\Omega^{i+1}(M)
\ar[d]_{\zeta_{i+1}}
  \ar[r] & R/J\otimes_R\Omega^{i+1}(N) \ar[d]_{\eta_{i+1}} \ar[r] & 0 \\
R/J\otimes_RJP_i \ar[d] \ar[r]_{\theta_{i+1}} & R/J\otimes_RJL_i \ar[d] \ar[r] & R/J\otimes_RJQ_i \ar[d] \ar[r] & 0 \\
R/J\otimes_RJ\Omega^i(K) \ar[d] \ar[r]^{\vartheta_{i}} & R/J\otimes_RJ\Omega^i(M) \ar[d] \ar[r] & R/J\otimes_RJ\Omega^i(N) \ar[d] \ar[r] & 0. \\
         0               & 0               & 0 }$$$$\rm{(Fig. \;4.4)}$$
Similar to the analysis of (1), we have that $\epsilon_{i+1}$,
$\varepsilon_{i+1}$, $\zeta_{i+1}$ and $\theta_{i+1}$ are
monomorphisms for all $i\geq 0$. Note that
\begin{eqnarray*}
J\Omega^{i}(K)\cap J(J\Omega^{i}(M))&=&J\Omega^{i}(K)\cap
J^{2}\Omega^{i}(M)\\&=&J\Omega^{i}(K)\cap
J^{2}\Omega^{i}(M)\cap\Omega^{i}(K)\\&=&J\Omega^{i}(K)\cap
J^{2}\Omega^{i}(K)\\&=&J^2\Omega^{i}(K).
\end{eqnarray*}By Lemma \ref{lem3}, we have that $\vartheta_i$ is a
monomorphism for each $i\geq 0$. Now by ``$3\times3$-Lemma'' to Fig.
4.4, we have that $\eta_{i+1}$ is a monomorphism for each $i\geq 0$.
By Lemma \ref{lem3}, we have $J\Omega^{i+1}(N)=\Omega^{i+1}(N)\cap
J^2Q_i$ for all $i\geq0$, thus $N$ is a quasi-Koszul module.
\end{proof}
\medskip
{\bf Acknowledgements} The author would like to give his sincere
thanks to the referees for the careful reading and improved
suggestions, which improve the quality of the manuscript a lot.
\vspace{1cm}
\bibliographystyle{amsplain}

\end{document}